\documentclass{article}    


\usepackage[all]{xy}
\usepackage{latexsym,amssymb,amsfonts,amsmath,bm,amsthm}
\usepackage{proof}
\newcommand{\pullbackcorner}[1][dr]{\save*!/#1-1.2pc/#1:(-1,1)@^{|-}\restore}
 \newtheorem{definition}{Definition}
  \newtheorem{theorem}[definition]{Theorem}
  \newtheorem*{theorem*}{Theorem}
  \newtheorem{lemma}[definition]{Lemma}

  \theoremstyle{remark}

\def\sub{\subseteq}
\def\cov{\triangleleft}
\def\fish{\ltimes}

\def\P{\mathcal{P}}
\def\S{\mathcal{S}}

\title{Factorizing the $\mathbf{Top}$--$\mathbf{Loc}$ adjunction through positive topologies}

\author{Francesco Ciraulo\\ {\footnotesize Department of Mathematics, University of Padova}\\ {\footnotesize Via Trieste 63, 35121 Padova, Italy}\\ \phantom{ }\\ Tatsuji Kawai\\ {\footnotesize Japan Advanced Institute of Science and Technology}\\ 
{\footnotesize 1-1 Asahidai, Nomi, Ishikawa 923-1292, Japan}\\ \phantom{ }\\ Samuele Maschio\\ {\footnotesize Department of Mathematics, University of Padova}\\ {\footnotesize Via Trieste 63, 35121 Padova, Italy}}

\date{}

\begin{document}

\maketitle

\begin{abstract}
We characterize the category of Sambin's positive topologies as a fibration over the category of locales {\bf Loc}. The fibration is obtained by applying the Grothendieck construction to a doctrine over {\bf Loc}. We then construct an adjunction between the category of positive topologies and that of topological spaces {\bf Top}, and show that the well-known adjunction between {\bf Top} and {\bf Loc} factors through the newly constructed adjunction.\\

\noindent {\bf Keywords:} Grothendieck constructions; suplattices; locales; formal to\-pol\-o\-gies.\\

\noindent {\bf Mathematics Subject Classification (2010):} 06D22; 18B30; 03G30.
\end{abstract}





						\and





\section{Introduction}\label{prequel}

Positive topologies are introduced in \cite{bp} (see also \cite{CS}) as a natural structure for developing pointfree constructive topology. The category {\bf PTop} of positive topologies can be regarded as a natural extension of the category {\bf Loc} of locales; actually {\bf Loc} is a reflective subcategory of {\bf PTop}. In a predicative setting, the role of a locale is played by a formal cover $(S,\triangleleft)$ which can be read as a presentation of a frame by generators and relations. A positive topology is then a formal cover 
endowed with a positivity relation, that is a relation $\ltimes$ between $S$ and $\mathcal{P}(S)$ such that for every $a\in S$ and  $U,V\subseteq S$
\begin{enumerate}
\item $a\ltimes U$ $\Longrightarrow$ $a\in U$;
\item $a\ltimes U\wedge (\forall b\in S)(b\ltimes U\rightarrow b\in V)$ $\Longrightarrow$ $a\ltimes V$;
\item $a\triangleleft U\wedge a\ltimes V$ $\Longrightarrow$ $(\exists b\in U)(b\ltimes V)$.
\end{enumerate}
The motivating example of a positive topology is built from a topological space, in such a way to keep the information about its closed subsets (classically, all such information is already encoded by the opens); see Section \ref{sec:CanPos}.

In \cite{CV} the first author and S.\ Vickers characterize positive topologies as locales endowed with a family of suplattice homomorphisms. Here we show that this characterization can be organized into a fibration arising from a doctrine via the so-called Grothendieck construction (see, e.g.\ \cite{BJ}). 

We will then use this representation to give an adjunction between the category {\bf Top} of topological spaces and {\bf PTop}; in particular, the notion of sobriety provided by this adjunction coincides with the one introduced in \cite{bp},  which is known to be constructively weaker than the usual one \cite{aczel-fox}.
Moreover, the usual {\bf Top}--{\bf Loc} adjunction can be factorized as the composition of the {\bf Top}--{\bf PTop} adjunction above and the reflection {\bf PTop}--{\bf Loc}.

As a by-product, we get the completeness and cocompleteness of the category {\bf PTop} (and of the wider category {\bf BTop}). This completes the picture in~\cite{IK}, where the pointwise counterparts of {\bf BTop} and {\bf PTop} were shown to be complete and cocomplete.

Our foundational framework is intuitionistic and impredicative, like that provided by the internal language of a topos. We use the term ``constructive'' in this sense.


\section{Basic topologies and positive topologies}\label{section bp}

According to the usual definition, a {\bf suplattice} (aka \emph{complete join semilattice}) is a poset $(L,\leq)$ with all joins, that is $\bigvee X$ exists for all subsets $X\subseteq L$.\footnote{In particular, $L$ has the least element 0, namely the empty join. (Actually $L$ has the top element 1 as well, although this should be understood as the empty meet.)}\\ A map $f:L\to M$ \emph{preserves joins} if
$$f\big(\bigvee_{i\in I}x_i\big) = \bigvee_{i\in I}f(x_i)$$
for every family $(x_i)_{i\in I}$ in $L$.
Suplattices and join-preserving maps form a category $\mathbf{SL}$. We hence refer to join-preserving maps between suplattices as suplattice homomorphisms. 

A \emph{base} for $L$ is a subset $S\sub L$ such that $p$ = $\bigvee\{a\in S\ |\ a\leq p\}$ for all $p \in L$. For instance, the powerset $\P(S)$ of a set $S$ is a suplattice (with respect to union); a base for $\P(S)$ is given by all singletons.\footnote{Incidentally, note that $\P(S)$ is the free suplattice over the set $S$.} Given a base $S$, let $\cov\sub S\times\P(S)$ be the relation defined as $a\cov U$ iff $a\leq\bigvee U$. It is easy to check that $\cov$ satisfies the following properties
\begin{enumerate}
\item $a\in U\ \Longrightarrow\  a\cov U$
\item $a\cov U\ \land\ (\forall u\in U)(u\cov V)\ \Longrightarrow\ a\cov V$
\end{enumerate}
for every $a\in S$ and $U,V\sub S$. A pair $(S,\cov)$ satisfying 1 and 2 above is called a {\bf basic cover}.
A basic cover has to be understood as a presentation of a suplattice by generators and relations. Indeed, any cover induces an equivalence relation $=_\cov$ on $\P(S)$ where $U=_\cov V$ is $(\forall u\in U)(u\cov V)\land (\forall v\in V)(v\cov U)$. The quotient collection $\P(S)_{/_{=_\cov}}$ is a suplattice (with a base indexed by $S$) where joins $\bigvee_i[U_i]$ can be computed as $[\bigcup_i U_i]$. To complete the picture, one should note that the cover induced by a suplattice $L$ presents a suplattice which is isomorphic to $L$ itself.

Two basic covers $\S_1=(S_1,\cov_1)$ and $\S_2=(S_2,\cov_2)$ are isomorphic if they induce isomorphic suplattices. More generally we say that a morphism from $\S_1$ and $\S_2$ is a suplattice homomorphism from $\P(S_2)/=_{\cov_2}$ and $\P(S_1)/=_{\cov_1}$.\footnote{Contravariance is chosen to match the direction of locales.} This corresponds to having a relation $s\sub S_1\times S_2$ which {\bf respects the covers} in the following sense:
$$\textrm{if }a\,s\,b\textrm{ and } b\cov_2 V\textrm{, then }a\cov_1 s^-V$$ 
where $s^- V=\{x\in S_1\ |\ (\exists v\in V)(x\,s\,v)\}$. Actually, the same homomorphism corresponds to several relations which we want to consider equivalent; explicitly, two relations $s$ and $s'$ are equivalent if $s^-V=_{\cov_1}s'^-V$ for all $V\sub S_2$.

Basic covers and their morphisms form a category which is dual to the category ${\bf SL}$ of suplattices, that is, equivalent to $\mathbf{SL}^{op}$.

\subsection{Basic topologies}\label{sec:BTop}

A {\bf basic topology} is a triple $(S,\triangleleft,\ltimes)$ where $(S,\triangleleft)$ is a basic cover and $\ltimes$ is a relation between $S$ and $\mathcal{P}(S)$ such that
\begin{enumerate}
\item $a\ltimes U$ $\Longrightarrow$ $a\in U$;
\item $a\ltimes U\wedge (\forall b\in S)(b\ltimes U\rightarrow b\in V)$ $\Longrightarrow$ $a\ltimes V$;
\item $a\triangleleft U\wedge a\ltimes V$ $\Longrightarrow$ $(\exists b\in U)(b\ltimes V)$.
\end{enumerate}
The relation $\ltimes$ is called a {\bf positivity relation} on $(S, \triangleleft)$.  Thus, a basic topology can be regarded as a suplattice together with the extra structure specified by a positivity relation.

The powerset $\Omega=\mathcal{P}(1)$ of a singleton can be identified with the algebra of propositions up to logical equivalence. 
The last condition in the definition above says that the map
$$\begin{array}{ccccc}
\varphi_Z & : & \P(S)/=_{\cov} & \longrightarrow & \Omega\phantom{\ \ } \\
 & & {[U]} & \longmapsto & U\between Z\ \footnotemark
\end{array}$$
\footnotetext{For $U,V\subseteq S$, we use Sambin's ``overlap" symbol $U\between V$ to mean that $U\cap V$ is inhabited. Clearly $U\between V$ implies $U\cap V\neq\emptyset$. The converse is equivalent to the law of excluded middle, as it is clear by considering the case of $\Omega$. In that case, $p\between q$ means $p=q=1$, and  so $p\between p$ is just $p=1$. On the contrary, $p\cap p\neq 0$ is $p\neq 0$, that is $(\neg\neg p)=1$.}%
is well-defined if $Z$ is of the form $\{a\in S\ |\ a\fish V\}$, in which case, $\varphi_Z$ is a suplattice homomorphism. In other words, each $\{a\in S\ |\ a\fish V\}$ gives a suplattice interpretation of $(S,\triangleleft)$ into the set $\Omega$ of truth values. Given any positivity relation $\fish$ on $(S,\cov)$, the collection of all such $\varphi_Z$ forms a sub-suplattice of $\mathbf{SL}({\P(S)/\! =_{\cov}}\,,\,\Omega)$. Ciraulo and Vickers \cite[Theorem 2.3]{CV} have shown that there is a bijective correspondence between positivity relations on $(S,\triangleleft)$ and sub-suplattices of $\mathbf{SL}({\P(S)/\! =_{\cov}}\,,\,\Omega)$. Thus, a basic topology can be identified with a pair $(L, \Phi)$ of a suplattice  $L$ and a sub-suplattice $\Phi$ of the collection $\mathbf{SL}(L,\Omega)$ of suplattice homomorphisms from $L$ to $\Omega$.

Let $\S_1 = (S_1, \triangleleft_1, \ltimes_1)$ and $\mathcal{S}_2 = (S_2, \triangleleft_2, \ltimes_2)$ be basic topologies, and $(L_1, \Phi_1)$ and $(L_2,\Phi_2)$ be the corresponding suplattices together with sub-suplattices of suplattice homomorphisms into $\Omega$. According to \cite{bp}, a morphism between basic topologies $\mathcal{S}_1$ and $\mathcal{S}_2$ is a morphism $s$ between $(S_1,\cov_1)$ and $(S_2,\cov_2)$ satisfying the following additional condition
$$\textrm{if }a\,s\,b\textrm{ and }a\fish_1 U\textrm{, then } b\fish_2 s\,U$$
for all $a\in S_1$, $b\in S_2$ and $U\sub S_1$, where $s\, U=\{y\in S_2 \mid (\exists\,u\in U)(u\,s\,y)\}$. This corresponds to having a suplattice homomorphism $f : L_2 \to L_1$ such that $\Phi_1 \circ f \subseteq \Phi_2$, where $\Phi_1 \circ f = \left\{ \varphi \circ f \mid \varphi \in \Phi_1 \right\}$; in other words
$$\textrm{if }L_1\stackrel{\varphi}{\longrightarrow}\Omega\textrm{ belongs to }\Phi_1\textrm{, then }L_2\stackrel{f}{\longrightarrow}L_1\stackrel{\varphi}{\longrightarrow}\Omega\textrm{ belongs to }\Phi_2$$
(see \cite[Proposition 2.9]{CV}).

Let $\mathbf{BTop}$ be the category whose objects are pairs $(L,\Phi)$ of a suplattice $L$ and a sub-suplattice $\Phi$ of $\mathbf{SL}(L,\Omega)$ and whose arrows $f : (L_1,\Phi_1) \to (L_2,\Phi_2)$ are suplattice homomorphisms $f : L_2 \to L_1$ such that $\Phi_1 \circ f \subseteq \Phi_2$. Apart from the impredicativity involved, $\mathbf{BTop}$ is equivalent to the category of basic topologies in \cite{bp}.

\subsection{Positive topologies}\label{sec:PTop}

A {\bf positive topology} is a basic topology $(S,\triangleleft,\ltimes)$ such that the underlying basic cover $(S,\triangleleft)$ is a formal cover \cite{cms13} (sometimes called formal topology). This means that the suplattice presented by $(S,\triangleleft)$ is a {\bf frame}, that is, binary meets distribute over arbitrary joins.

By a similar observation as we have made for basic topology in Section \ref{sec:BTop}, a positive topology can be identified with a pair $(L, \Phi)$ where $L$ is a frame and $\Phi$ is a sub-suplattice of $\mathbf{SL}(L,\Omega)$. A morphism between such pairs $(L, \Phi)$ and $(M, \Psi)$ is a frame homomorphism $f : M \to L$ such that  $\Phi \circ f \subseteq \Psi$, which corresponds to a formal map between positive topologies as described in \cite{bp}.

Let $\mathbf{PTop}$ be the subcategory of $\mathbf{BTop}$ consisting of objects whose underlying suplattice is a frame and arrows which are frame homomorphisms between underlying frames.  The category $\mathbf{PTop}$ is thus essentially equivalent to that of positive topologies in \cite{bp}.

\section{A categorical characterization of {\bf BTop} and {\bf PTop}}

In this section, we are going to give a categorical characterization of {\bf BTop} and {\bf PTop} in terms of Grothendieck constructions over two doctrines on suplattices and locales, respectively.

If $X$ is a set and $L$ is (the carrier of) a suplattice, then the collection of maps $\mathbf{Set}(X,L)$ has a natural suplattice structure where joins are computed pointwise, that is,
$$\big({\bigvee_{i\in I}}\varphi_{i}\big)(x):={\bigvee_{i\in I}}\big(\varphi_{i}(x)\big)\ .$$ When $X$ has a suplattice structure, then $\mathbf{SL}(X,L)$ is a suplattice as well, actually a sub-suplattice of $\mathbf{Set}(X,L)$. 

\subsection{A doctrine on suplattices} \label{sec:DoctSL}

For $L$ a suplattice, the (contravariant) hom-functor $\mathbf{SL}(\__ ,L)\,:\,\mathbf{SL}^{op}\to\mathbf{Set}\ $ can  be also regarded as a functor $$\mathbf{SL}(\__ ,L)\,:\,\mathbf{SL}\to\mathbf{SL}^{op}\ $$ where, for $f\in\mathbf{SL}(X,Y)$ and $\varphi\in\mathbf{SL}(Y,L)$, we have $\mathbf{SL}(f,L)(\varphi)$ = $\varphi\circ f$.

Another well-known contravariant endofunctor is the subobject functor
$$\mathbf{Sub}\,:\,\mathbf{SL}^{op}\to\mathbf{PreOrd}$$
which sends each suplattice $L$ to the preorder $\mathbf{Sub}(L)$ of subobjects of $L$ in $\mathbf{SL}$. Recall that a suboject of $L$ can be represented as a subset $I\subseteq L$ closed under joins in $L$. Given $f:M\rightarrow L$ in $\mathbf{SL}$ and $I\in\mathbf{Sub}(L)$, $\mathbf{Sub}(f)$ sends $I$ to the pullback $\{x\in M\ |\ f(x)\in I\}$ of $I$ along $f$.
$$\xymatrix@M=9pt{
\mathbf{Sub}(f)(I)\ar@{>->}[d]\ar[r]\pullbackcorner				& I\ar@{>->}[d]^{i}\\
M\ar[r]_{f}		&L\\}$$
The composition $\mathbf{Sub}\circ \mathbf{SL}(\__,\Omega)$ is a functor $$\mathbf{P}:\mathbf{SL}\rightarrow \mathbf{PreOrd}$$
which, of course, can also be read as a contravariant functor on $\mathbf{SL}^{op}$
$$\mathbf{P}:(\mathbf{SL}^{op})^{op}\rightarrow \mathbf{PreOrd},$$
that is, a doctrine on $\mathbf{SL}^{op}$.
By the so-called Grothendieck construction \cite{BJ}, we get a category $\int\mathbf{P}$ whose objects are pairs 
$(L,\Phi)$ with $L$ a suplattice and $\Phi$ a subobject of $\mathbf{SL}(L,\Omega)$ in $\mathbf{SL}$.
An arrow $(L,\Phi)\to(M,\Psi)$ in $\int\mathbf{P}$ is a suplattice homomorphism $f:M\rightarrow L$ such that 
$$\Phi\subseteq\mathbf{P}(f)(\Psi).$$
Since $\mathbf{P}(f)(\Psi)$ = $\{\varphi\in\mathbf{SL}(L,\Omega)\ |\ \varphi\circ f\in\Psi\}$ by definition, such a condition is equivalent to the following
$$\Phi\circ f\subseteq \Psi,$$
where $\Phi\circ f:=\{\varphi\circ f \mid \varphi\in \Phi\}$. Therefore, $\int\mathbf{P}$ is exactly the category $\mathbf{BTop}$ of Sambin's basic topologies \cite{bp}, which we introduced in Section \ref{sec:BTop} above.

This construction yields a forgetful functor $ \mathbf{U}:\int \mathbf{P}\rightarrow \mathbf{SL}^{op}$, which is in fact a fibration. The functor has a right adjoint, the \emph{constant object functor} 
$$\mathbf{\Delta}: \mathbf{SL}^{op}\rightarrow \int\mathbf{P},$$ 
which sends each suplattice $L$ to the object $(L,\mathbf{SL}(L,\Omega))$ and each $f:L\rightarrow M$ in $\mathbf{SL}^{op}$ to itself as an arrow from $\mathbf{\Delta(L)}$ to $\mathbf{\Delta(M)}$ in $\int \mathbf{P}$.


$$\mathbf{BTop}\ =
\xymatrix{
 \int\mathbf{P}\ar@/^0.5pc/[rr]^-{ \mathbf{U}}	&\bot	&\mathbf{SL}^{op}\ar@/^0.5pc/[ll]^-{\mathbf{\Delta}}\\
}$$
Moreover $ \mathbf{U}\circ\mathbf{\Delta}$ is just the identity functor on $\mathbf{SL}^{op}$. Thus, $\mathbf{\Delta}$ is full and faithful, and so $\mathbf{SL}^{op}$ can be regarded as a reflective subcategory of $\int \mathbf{P}$. In this way, we recover the result in \cite{galois}.

Note that the monad $T$  induced by the adjunction $ \mathbf{U}\dashv\mathbf{\Delta}$ is an idempotent monad. By the results in Section $4.2$ of \cite{borceux2}, we have that ${\bf SL}^{op}$ is equivalent both to the category of free algebras (the Kleisli category) and to the category of algebras (the Eilenberg--Moore category) on $T$. Hence the adjunction $ \mathbf{U}\dashv\mathbf{\Delta}$ is monadic.

\paragraph{Remark.}

In a suplattice, arbitrary meets always exist, that is, if $(L,\leq)$ is a suplattice, then $(L,\leq)^{op} := (L,\geq)$ is a suplattice as well. Moreover, every suplattice homomorphism $f$ has a right adjoint $f^{op}$ which preserves all meets. This determines  a contravariant functor $(\_)^{op}$, which is in fact a self-duality of $\mathbf{SL}$, and 
$$\mathbf{SL}(X,Y)\cong\mathbf{SL}(Y^{op},X^{op})$$
for all $X$ and $Y$. 

Classically, $\mathbf{SL}(\__,\Omega)$ is naturally isomorphic to the functor $(\_)^{op}$ because $\Omega^{op}\cong\Omega$ so that $\mathbf{SL}(L,\Omega)$ $\cong$ $\mathbf{SL}(\Omega,L^{op})$ $\cong$ $L^{op}$.\footnote{This cannot hold constructively, for if $\varphi: \Omega^{op}\to\Omega$ were an isomorphism, then we could prove $\neg\neg p\leq p$ for every $p\in\Omega$ as follows. Indeed $p=0$ implies $\neg\neg p=0$, and so $\varphi(p)=1$ implies $\varphi(\neg\neg p)=1$. By a characteristic feature of $\Omega$, this gives $\varphi(p)\leq\varphi(\neg\neg p)$, hence $\neg\neg p\leq p$.} 
%
Therefore, for every $L$, $\mathbf{P}(L)=\mathbf{Sub}(\mathbf{SL}(L,\Omega))\cong  \mathbf{Sub}(L^{op})$ which is isomorphic to the lattice of all suplattice quotients of $L$. In other words, an object $(L,\Phi)$ corresponds to an epimorphism $e:L\to\Phi^{op}$, and an arrow $(L,\Phi)\to(M,\Psi)$ is a suplattice homomorphism $f:M\to L$ such that $e\circ f: M\to\Phi^{op}$ preserves the congruence relation on $M$ corresponding to $\Psi$.


\subsection{The case of frames (and locales)} \label{sec:DoctLoc}

The category $\mathbf{Frm}$ of frames is the subcategory of $\mathbf{SL}$ whose objects are frames and whose arrows preserve finite meets (in addition to arbitrary joins). 
The category $\mathbf{Loc}$ of locales is defined as $\mathbf{Frm}^{op}$.
By restricting the functor $\mathbf{P}$ to frames, we get a doctrine
$$\widetilde{\mathbf{P}}:\mathbf{Loc}^{op}=\mathbf{Frm}\longrightarrow\mathbf{PreOrd}$$ 
on $\mathbf{Loc}$,
which gives rise to a fibration
$ \mathbf{U}:\int\widetilde{\mathbf{P}}\rightarrow \mathbf{Loc}$ fitting in a pullback square in $\mathbf{Cat}$ as follows.

$$\xymatrix{
 \int\widetilde{\mathbf{P}}\ \ar@{>->}[r]\ar[d]_{ \mathbf{U}}\pullbackcorner	&\int\mathbf{P}\ar[d]^{ \mathbf{U}}\\
\mathbf{Loc}\ \ar@{>->}[r]			&\mathbf{SL}^{op}\\
}$$
Here $\int\widetilde{\mathbf{P}}$ is exactly the category $\mathbf{PTop}$ as introduced in Section \ref{sec:PTop}.

\section{Weakly sober spaces}

\subsection{Irreducible closed subsets}\label{irreducible}

The open sets of a  topological space $(X,\tau)$ form a frame with respect to the set-theoretic unions and intersections. 
A subset $C\subseteq X$ is {\bf closed} if
$$(\forall I\in\tau)(x\in I\ \Rightarrow\ C\between I)\ \Longrightarrow x\in C$$
for all $x\in X$. The collection $\mathsf{Closed}(X,\tau)$ of closed subsets in $(X,\tau)$ is a complete lattice (where infima are given by intersections, and joins are given by closure of unions), but constructively need not be a co-frame.\footnote{For a Brouwerian counterexample consider the discrete space and recall that the so-called ``constant domain axiom" $\forall x(\varphi\vee\psi)\to\varphi\vee\forall x\,\psi$, with $x$ not free in $\varphi$, is not provable constructively.} 

As usual, it makes sense to define the closure $\mathsf{cl}D$ of a subset $D\subseteq X$ as the intersection of all closed subsets containing $D$. 

Every closed subset $C$ of $X$ determines a map 
$$
\begin{array}{rcrcl}
\varphi_C & : & \tau & \longrightarrow & \Omega \\
 & & I &\longmapsto & C\between I
\end{array}
$$ 
which preserves joins, that is, $\varphi_C\in\mathbf{SL}(\tau,\Omega)$.
Note that $\varphi_D$ makes sense also when $D$ is an arbitrary subset; however $\varphi_{D}=\varphi_{\mathsf{cl}D}$ because $I\between D$ if and only if $I\between\mathsf{cl}D$ for every $I\in \tau$. So the mapping 
$$\begin{array}{rcl}
\mathsf{Closed}(X,\tau) & \longrightarrow & \mathbf{SL}(\tau,\Omega) \\
 C & \longmapsto & \varphi_C
\end{array}$$
is injective and preserves joins. Thus $\mathsf{Closed}(X,\tau)$ is a subobject of ${\bf SL}(\tau,\Omega)$.\footnote{Classically, every $\varphi\in\mathbf{SL}(\tau,\Omega)$ is of the form $\varphi_{C}$: take $C$ to be the closed subset  \mbox{$X\setminus\bigcup\left\{I \in \tau \mid \varphi(I)=0 \right\}$}. Hence $\mathsf{Closed}(X,\tau)$ $\cong$ ${\bf SL}(\tau,\Omega)$. This cannot be the case constructively, as we will see in Section \ref{weak-sobriety}.}

A closed subset $C\subseteq X$ is {\bf irreducible} if any of the following equivalent conditions holds:
\begin{enumerate}
\item $\varphi_C$ preserves finite meets;
\item $C$ is inhabited and for every $I,J\in\tau$, if $I\between C$ and $J\between C$, then $(I\cap J)\between C$;
\item $\{I\in\tau\ |\ I\between C\}$ is a completely-prime filter of opens.
\end{enumerate}
In other words, a closed subset $C$ is irreducible if and only if $\varphi_C$ is a frame homomorphism, that is, a \emph{point} in the sense of locale theory. 
However we cannot show constructively that all frame homomorphisms $\tau\to\Omega$ arise in this way; see Section \ref{weak-sobriety}. 

Classically, $C$ is irreducible if and only if it is non-empty and cannot be written as a disjoint union of two non-empty closed subsets \cite{stone_spaces}. Therefore, $\{C\subseteq X\ |\ C\textrm{   irreducible closed}\}$ can be identified with $\mathbf{Frm}(\tau,\Omega)$.

\subsection{Weak sobriety}\label{weak-sobriety}

Recall that a space is $T0$ or Kolmogorov if $x=y$ follows from the assumption that $\mathsf{cl}\{x\}=\mathsf{cl}\{y\}$. Since $\mathsf{cl}\{x\}$ is always irreducible, we have the following embedding for a $T0$ space $(X,\tau)$:
$$X\hookrightarrow\{C\subseteq X\ |\ C\textrm{   irreducible closed}\}\hookrightarrow\mathbf{Frm}(\tau,\Omega).$$
A $T0$ space is {\bf weakly sober} if every irreducible closed subset is the closure of a singleton, that is, if the embedding $X\hookrightarrow\{C\sub X\ |\ C\textrm{ irreducible closed}\}$ is an isomorphism. It is {\bf sober} if the embedding $X\hookrightarrow\mathbf{Frm}(\tau,\Omega)$ is an isomorphism.
Note that every weakly sober space is sober classically.

Constructively, every Hausdorff space is weakly sober. However, if every weakly sober space were sober, the non-constructive principle LPO (the Limited Principle of Omniscience) would be derivable \cite{aczel-fox}. 
Thus, we cannot prove that all $\varphi\in\mathbf{SL}(\tau,\Omega)$ are of the form $\varphi_C$ for some closed subset $C$; otherwise $\mathbf{Frm}(\tau,\Omega)$ could be identified with the irreducible closed subsets, which would make sobriety and weak sobriety coincide.

\section{Factorizing the $\mathbf{Top}$--$\mathbf{Loc}$ adjunction}

The usual $\mathbf{\Omega}\dashv \mathbf{Pt}$ adjunction between the category  $\mathbf{Top}$ of topological spaces and the category $\mathbf{Loc}$ of locales does not compose with the adjunction $\mathbf{U}\dashv\mathbf{\Delta}$ between $\mathbf{Loc}$ and $\mathbf{PTop}$ to give an adjunction between $\mathbf{Top}$ and $\mathbf{PTop}$. 

$$\xymatrix{
\mathbf{Top} \ar@/^/^{\mathbf{\Omega}}[rr] &\bot& \mathbf{Loc}\ar@/^/^{\mathbf{Pt}}[ll] \ar@/_/_{\mathbf{\Delta}}[rr] &\,\,\bot& \mathbf{PTop}\ar@/_/_{\mathbf{U}}[ll]
}$$

Nevertheless, a meaningful adjunction between $\mathbf{Top}$ and $\mathbf{PTop}$ can be given, as explained in the following, through which the usual $\mathbf{Top}$--$\mathbf{Loc}$ adjunction factors. 

\subsection{Points of a positive topology}

The suplattice $\Omega$ is an initial frame, that is, a terminal locale. Hence $\mathbf{\Delta}(\Omega)$ is a terminal object in $\mathbf{PTop}$. We define a {\bf point} of a positive topology $(L,\Phi)$ as a global point  $\mathbf{\Delta}(\Omega)\to(L,\Phi)$ in $\mathbf{PTop}$, and we write $\mathbf{Pt}^{+}(L,\Phi)$ instead of $\mathbf{PTop}(\mathbf{\Delta}(\Omega),(L,\Phi))$.  Thus, a point of $(L,\Phi)$ is a frame homomorphism $f:L\to\Omega$ such that $\mathbf{SL}(\Omega,\Omega)\circ f\subseteq\Phi$. Since $\mathbf{SL}(\Omega,\Omega)$ contains the identity map, we have  $f\in\Phi$. Conversely, if $f\in \Phi$ and $\varphi\in\mathbf{SL}(\Omega,\Omega)$, then we have $\varphi\circ f=\bigvee\{x\in \{f\}|\,\varphi=\mathsf{id}_{\Omega}\}\in \Phi$. In other words, the points of $(L,\Phi)$ are exactly those points of the locale $L$ that are in $\Phi$. Hence, $\mathbf{Pt}^+(L,\Phi)$ can be regarded as a subspace of the topological space $\mathbf{Pt}(L)$. 

The construction $\mathbf{Pt}^{+}$ can be extended to a functor from $\mathbf{PTop}$ to $\mathbf{Top}$ as follows. Given an arrow $(L,\Phi)\to(M,\Psi)$ with underlying frame homomorphism $f:M\to L$, the continuous map $\mathbf{Pt}(f):\mathbf{Pt}(L)\to \mathbf{Pt}(M)$, which sends a point $p:L\to\Omega$ to the point $p\circ f:M\to\Omega$, can be restricted to a continuous map $\mathbf{Pt}^+(L,\Phi)\to \mathbf{Pt}^+(M,\Psi)$ because $\Phi\circ f\subseteq\Psi$.


\subsection{The canonical positive topology associated with a space}\label{sec:CanPos}

As shown in Section \ref{irreducible}, the closed subsets $\mathsf{Closed}(X,\tau)$ of a topological space $(X,\tau)$ can be seen as a sub-suplattice of $\mathbf{SL}(\tau,\Omega)$ via the mapping \mbox{$C\mapsto\varphi_C$}. 
Thus, we can define a functor $\mathbf{\Lambda}:\mathbf{Top}\to\mathbf{PTop}$ whose object part is $$\mathbf{\Lambda}(X,\tau)\ =\ \big(\tau,\{\varphi_C \mid C \textrm{ is closed}\}\big).$$ For a continuous map $f:(X,\tau_X)\to (Y,\tau_Y)$, the $\mathbf{PTop}$-morphism $\mathbf{\Lambda}(f)$ is just the locale morphism corresponding to the frame homomorphism \mbox{$f^{-1}:\tau_Y\to\tau_X$}. This makes sense because for any closed subset $C\subseteq X$, the suplattice homomorphism $\varphi_C\circ f^{-1}:\tau_Y\to\Omega$ is precisely  $\varphi_D$, where $D=\mathsf{cl}f(C)$.

\subsection{The adjunction between $\mathbf{Pt}^+$ and $\mathbf{\Lambda}$}

\begin{theorem*}The following hold:
\begin{enumerate}
\item $\mathbf{Pt}=\mathbf{Pt}^+\circ \mathbf{\Delta}$;
\item $\mathbf{\Omega}= \mathbf{U}\circ \mathbf{\Lambda}$;
\item $\mathbf{\Lambda}\dashv \mathbf{Pt}^+$.
\end{enumerate}
As a consequence, the adjunction between $\mathbf{Top}$ and $\mathbf{Loc}$ factors through an adjunction between $\mathbf{PTop}$ and $\mathbf{Loc}$. 
$$\xymatrix{
\mathbf{Top}\ar@/^2pc/[rrrr]^-{\mathbf{\Omega}}\ar@/^0.5pc/[rr]^-{\mathbf{\Lambda}}	&\bot	&\mathbf{PTop}\ar@/^0.5pc/[ll]^-{\mathbf{Pt}^+}\ar@/^0.5pc/[rr]^-{ \mathbf{U}}	&\bot	&\mathbf{Loc}\ar@/^0.5pc/[ll]^-{\mathbf{\Delta}}\ar@/^2pc/[llll]^-{\mathbf{Pt}}\\
}$$
\end{theorem*}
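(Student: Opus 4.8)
The plan is to reduce everything to the classical $\mathbf{\Omega}\dashv\mathbf{Pt}$ adjunction, together with the description (obtained in the previous subsection) of $\mathbf{Pt}^+(L,\Phi)$ as a subspace of the topological space $\mathbf{Pt}(L)$, namely the set of points of the locale $L$ that belong to $\Phi$, with the subspace topology.

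Items 1 and 2 are unravellings of definitions. For item 1, $\mathbf{\Delta}(L)=(L,\mathbf{SL}(L,\Omega))$, and for every frame homomorphism $f\colon L\to\Omega$ and every $\varphi\in\mathbf{SL}(\Omega,\Omega)$ the composite $\varphi\circ f$ again lies in $\mathbf{SL}(L,\Omega)$; hence the side condition in the definition of $\mathbf{Pt}^+$ is vacuous, so $\mathbf{Pt}^+(\mathbf{\Delta}(L))$ has the same points as $\mathbf{Pt}(L)$, and, carrying the subspace topology of the whole space, the same topology. On morphisms both functors act as $\mathbf{Pt}(-)$. For item 2, $\mathbf{U}$ forgets the $\Phi$-component, so $\mathbf{U}(\mathbf{\Lambda}(X,\tau))=\tau=\mathbf{\Omega}(X,\tau)$, and on a continuous map $f$ both sides produce the locale morphism with underlying frame homomorphism $f^{-1}$.

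The content is item 3. A $\mathbf{PTop}$-morphism $\mathbf{\Lambda}(X,\tau)\to(L,\Phi)$ is a frame homomorphism $f\colon L\to\tau$ such that $\{\varphi_C\mid C\ \text{closed}\}\circ f\subseteq\Phi$; since $\varphi_D=\varphi_{\mathsf{cl}D}$ for every subset $D\subseteq X$, this condition says precisely that $\varphi_D\circ f\in\Phi$ for all $D\subseteq X$. On the other hand, by the classical $\mathbf{Top}$--$\mathbf{Loc}$ adjunction such frame homomorphisms correspond bijectively and naturally to continuous maps $g\colon X\to\mathbf{Pt}(L)$, via $g(x)(a)=1\Leftrightarrow x\in f(a)$. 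Under this correspondence one has $\varphi_{\{x\}}\circ f=g(x)$ for each $x$ (both send $a$ to the truth value of $x\in f(a)$), and in $\mathbf{SL}(\tau,\Omega)$ one has $\varphi_D=\bigvee_{x\in D}\varphi_{\{x\}}$, because evaluating the right-hand side at $I\in\tau$ gives the truth value of $(\exists x\in D)(x\in I)$, i.e. of $D\between I$. As joins in hom-suplattices are computed pointwise, precomposition with $f$ preserves them, so $\varphi_D\circ f=\bigvee_{x\in D}g(x)$. Consequently $\varphi_D\circ f\in\Phi$ for all $D$ iff $g(x)\in\Phi$ for all $x$: the forward implication takes $D=\{x\}$, and the converse uses that $\Phi$ is a sub-suplattice of $\mathbf{SL}(\tau,\Omega)$, hence closed under arbitrary joins (the empty join handling $D=\emptyset$). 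But $g(x)\in\Phi$ for all $x$ means exactly that $g$ factors through the subspace $\mathbf{Pt}^+(L,\Phi)\hookrightarrow\mathbf{Pt}(L)$, and, $\mathbf{Pt}^+(L,\Phi)$ carrying the subspace topology, such a factorization is automatically continuous. Hence the classical bijection restricts to a natural bijection $\mathbf{PTop}(\mathbf{\Lambda}(X,\tau),(L,\Phi))\cong\mathbf{Top}((X,\tau),\mathbf{Pt}^+(L,\Phi))$, i.e. $\mathbf{\Lambda}\dashv\mathbf{Pt}^+$.

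For the final assertion, composing $\mathbf{\Lambda}\dashv\mathbf{Pt}^+$ with the adjunction $\mathbf{U}\dashv\mathbf{\Delta}$ of Section~\ref{sec:DoctSL} yields $\mathbf{U}\circ\mathbf{\Lambda}\dashv\mathbf{Pt}^+\circ\mathbf{\Delta}$, which by items 1 and 2 is an adjunction $\mathbf{\Omega}\dashv\mathbf{Pt}$; tracing the hom-set bijections shows it coincides with the usual one. I do not expect a genuine obstacle here: the only delicate point is the identification, in item 3, of the $\Phi$-side condition for $\mathbf{PTop}$-morphisms with the ``image lands in the subspace'' condition for continuous maps, which rests on the two elementary identities $\varphi_{\{x\}}\circ f=g(x)$ and $\varphi_D=\bigvee_{x\in D}\varphi_{\{x\}}$ and on $\Phi$ being closed under joins; in particular the constructively subtle fact that $\mathbf{Pt}^+(L,\Phi)$ may be a proper subspace of $\mathbf{Pt}(L)$ never interferes, since we only use that it is a subspace.
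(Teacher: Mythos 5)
Your proof is correct and follows essentially the same route as the paper: both identify $\mathbf{Pt}^+(L,\Phi)$ as the subspace of $\mathbf{Pt}(L)$ cut out by $\Phi$, and both hinge on the same key computation that $\varphi_D\circ f$ is the (pointwise) join of the points $\varphi_{\{x\}}\circ f$ for $x\in D$, which lies in $\Phi$ because $\Phi$ is a sub-suplattice. The only difference is packaging: the paper writes out the two transposition maps $\widetilde{(\_)}$ and $\widehat{(\_)}$ explicitly, whereas you obtain bijectivity, continuity and naturality by restricting the classical $\mathbf{\Omega}\dashv\mathbf{Pt}$ correspondence, which is a slightly cleaner way to discharge those routine verifications.
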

\begin{proof}
For every locale $L$, $\mathbf{Pt}(L)=\mathbf{Pt}(L)\cap \mathbf{SL}(L,\Omega)=\mathbf{Pt}^+(\mathbf{\Delta}(L))$, and for every topological space $(X,\tau)$, $ \mathbf{U}(\mathbf{\Lambda}(X,\tau))=\tau=\mathbf{\Omega}(X,\tau)$. Hence 1 and 2 hold.

For 3, if $f:\mathbf{\Lambda}(X,\tau)\to (L,\Phi)$, then one can define a continuous function $\widetilde{f}$ from $(X,\tau)$ to $\mathbf{Pt}^+(L,\Phi)$ as follows:
$$\widetilde{f}(x):=\varphi_{\mathsf{cl}\{x\}}\circ f,$$
that is, for every $y\in L$, $\widetilde{f}(x)(y):=\mathsf{cl}\{x\}\between f(y)\in \Omega$.

Conversely, if $g$ is a continuous function from $(X,\tau)$ to $\mathbf{Pt}^+(L,\Phi)$, then an arrow  $\widehat{g}$ from $\mathbf{\Lambda}(X,\tau)$ to $(L,\Phi)$ is defined as follows:
$$\widehat{g}(y):=g^{-1}(\{\varphi\in \mathbf{Pt}(Y)\cap \Psi|\,\varphi(y)=1\})\in \tau$$
for every $y\in L$. This is an arrow in $\mathbf{PTop}$ because for every closed subset $C\sub X$, we have $\varphi_C\circ\widehat{g}$ = $\bigvee\{\varphi\in\mathbf{Pt}^+(L,\Phi)\ |\ \varphi\in g(C)\}\in\Phi$.

The maps $\widetilde{(\_)}$ and $\widehat{(\_)}$ define a natural isomorphism between the functors $\mathbf{PTop}(\mathbf{\Lambda}(\_),\_)$ and $\mathbf{Top}(\_\,,\mathbf{Pt}^+(\_))$.
\end{proof}

A topological space $(X,\tau)$ is weakly sober when $(X,\tau)\cong \mathbf{Pt}^+(\mathbf{\Lambda}(X,\tau))$, while it is sober when $(X,\tau)\cong \mathbf{Pt}(\mathbf{\Omega}(X,\tau))$. 

Classically, $\mathbf{SL}(\tau,\Omega)$ = $\{\varphi_C\ |\ C \textrm{ is closed}\}$ holds. Hence  $\mathbf{\Lambda}=\mathbf{\Delta}\circ\mathbf{\Omega}$, and thus $\mathbf{Pt}^+\circ\mathbf{\Lambda}=\mathbf{Pt}^+\circ \mathbf{\Delta}\circ\mathbf{\Omega}=\mathbf{Pt}\circ\mathbf{\Omega}$. Therefore, as already noted, sobriety and weak sobriety coincide classically. 

\section{Limits and colimits in {\bf BTop} and {\bf PTop}}

Whenever the base $\mathbb{C}$ of a doctrine $\mathbf{P}:\mathbb{C}^{op}\rightarrow \mathbf{PreOrd}$ is complete and the same holds for every fiber $\mathbf{P}(A)$, the Grothendieck construction $\int \mathbf{P}$ gives a complete category.
If $(L_{i},\Phi_{i})_{i\in I}$ is a set-indexed family of objects in $\int \mathbf{P}$, its product is given by the object 
$$\prod_{i\in I}(L_{i},\Phi_{i})\ =\ \left(\prod_{i\in I}L_{i},\bigwedge_{i\in I}\mathbf{P}(\pi_{i})(\Phi_{i})\right)$$
together with the projections $\pi_{i}$ inherited from $\mathbb{C}$. The equalizer of two parallel arrows  $f,g:(L,\Phi)\rightarrow (M,\Psi)$ in $\int \mathbf{P}$ is $e:\big(E,\mathbf{P}(e)(\Phi)\big)\rightarrow(L,\Phi)$, where $e:E\rightarrow L$ is the equalizer of $f$ and $g$ in $\mathbb{C}$.

If, moreover, $\mathbb{C}$ is cocomplete and, for every arrow $f$ of $\mathbb{C}$, the monotone map $\mathbf{P}(f)$ has a left adjoint $\exists_f$, then $\int\mathbf{P}$ is cocomplete as well. The coproduct of a family of objects $(L_{i},\Phi_{i})_{i\in I}$ in $\int \mathbf{P}$ is given by the object $$\sum_{i\in I}(L_i,\Phi_i)\ = \left(\sum_{i\in I}L_{i},\bigvee_{i\in I}\exists_{j_{i}}(\Phi_{i}) \right)$$
together with the injections $j_{i}$ inherited from $\mathbb{C}$. Finally, the coequalizer of  $f,g:(L,\Phi)\rightarrow (M,\Psi)$ is $q:(M,\Psi)\rightarrow(Q,\exists_{q}(\Psi))$, where $q:M\rightarrow Q$ is the coequalizer of $f$ and $g$ in $\mathbb{C}$.

 The doctrines $\mathbf{P}$ and $\widetilde{\mathbf{P}}$ introduced in Section \ref{sec:DoctSL} and Section \ref{sec:DoctLoc}, respectively, satisfy the above requirements.
 Indeed, every fiber of $\mathbf{P}$ and $\widetilde{\mathbf{P}}$ is a complete lattice because an arbitrary intersection of sub-suplattices is a sub-suplattice. Moreover, it is well known that both $\mathbf{SL}^{op}$ and $\mathbf{Loc}$ are complete and cocomplete. Finally, every $\mathbf{P}(f)$ has a left adjoint, namely $\exists_f(\Phi)=\Phi\circ f$, essentially by the very definition of $\mathbf{P}$. Hence, the categories $\mathbf{PTop}$ and $\mathbf{BTop}$ are complete and cocomplete.
 
\section*{Acknowledgements}
 Part of this work was carried out while the second author was visiting University of Padova in December 2018. The visit was supported by Core-to-Core Program (A. Advanced Research Networks) of Japan Society for the Promotion of Science.
\bibliographystyle{abbrv}
\bibliography{biblioCKM}

\end{document}